\documentclass[11pt]{article}
\usepackage{amsmath, amssymb, amsthm}
\usepackage[utf8]{inputenc}
\usepackage[T1]{fontenc}
\usepackage{geometry}
\usepackage{hyperref}

\newtheorem{theorem}{Theorem}
\newtheorem{proposition}[theorem]{Proposition}

\theoremstyle{remark}
\newtheorem{remark}[theorem]{Remark}

\newcommand{\zb}{\bar z}

\title{The complex form of Vekua's characteristic factor: a derivation, and two sign corrections in §7 of Generalized Analytic Functions}
\author{Daniel Alay\'on-Solarz\thanks{danieldaniel@gmail.com}}
\date{June 2026}

\begin{document}

\maketitle

\begin{abstract}
In \S7 of \emph{Generalized Analytic Functions} \cite{vekua}, the reduction of a
first-order elliptic system to canonical form proceeds through a factor of the
characteristic equation, which Vekua selects in real form~(7.13) and then
restates, without derivation, in complex (Beltrami) form~(7.14). We supply that
conversion. With the standard Wirtinger convention used below, the complex form
of~(7.13) is the negative of the coefficient printed in~(7.14) (p.~126, 1962
Pergamon edition), and we confirm the correct sign against Vekua's own
factorization~(7.12) and his canonical coefficient~(7.17). A related sign defect
appears in the second-order Beltrami coefficient~(7.23) (p.~127): a coordinate
solving (7.23) as printed reduces the equation to the canonical form~(7.26) only
in the special symmetric case $a=c$. In both instances the error is confined to
the displayed coefficient and leaves the surrounding reduction, carried out
independently of it, intact; we record the corrected coefficient in each case.
\end{abstract}

\section{The reduction}\label{sec:setup}

We recall the relevant equations of \cite[\S7]{vekua} in the author's notation;
all equation numbers in parentheses are Vekua's. The starting point is a
first-order elliptic system in real unknowns $u,v$, reduced at (7.5) to
\[
\begin{cases}
-v_y + a_{11}u_x + a_{12}u_y + a_1 u + b_1 v = f_1,\\
\phantom{-}v_x + a_{21}u_x + a_{22}u_y + a_2 u + b_2 v = f_2,
\end{cases}
\]
elliptic in the sense (7.6) that, with
\begin{equation}\label{eq:Delta}
\Delta := a_{11}a_{22} - \tfrac14(a_{12}+a_{21})^2,
\end{equation}
one has $a_{11}>0$ and $\Delta>0$.

To bring the system to canonical form one introduces a change of variables
$\zeta = \xi + i\eta$, $\zeta = \zeta(x,y)$, with Jacobian
$J := \xi_x\eta_y - \xi_y\eta_x \neq 0$ (Vekua's (7.10)). The principal
coefficients transform, by Vekua's formulas on p.~125, to
\begin{align}
a'_{11} &= \tfrac1J\bigl(a_{11}\xi_x^2 + (a_{12}+a_{21})\xi_x\xi_y + a_{22}\xi_y^2\bigr),\notag\\
a'_{12} &= \tfrac1J\bigl(a_{11}\xi_x\eta_x + a_{12}\xi_x\eta_y + a_{21}\xi_y\eta_x + a_{22}\xi_y\eta_y\bigr),\label{eq:aprime}\\
a'_{21} &= \tfrac1J\bigl(a_{11}\xi_x\eta_x + a_{12}\xi_y\eta_x + a_{21}\xi_x\eta_y + a_{22}\xi_y\eta_y\bigr),\notag\\
a'_{22} &= \tfrac1J\bigl(a_{11}\eta_x^2 + (a_{12}+a_{21})\eta_x\eta_y + a_{22}\eta_y^2\bigr).\notag
\end{align}
The canonical form is reached when (7.11) holds,
\begin{equation}\label{eq:canon}
a'_{11} = a'_{22}, \qquad a'_{12} = -a'_{21},
\end{equation}
which, written for $\zeta = \xi + i\eta$, is the single complex equation (7.12),
\begin{equation}\label{eq:char}
a_{11}\zeta_x^2 + (a_{12}+a_{21})\zeta_x\zeta_y + a_{22}\zeta_y^2 = 0 .
\end{equation}
Vekua selects the factor (7.13),
\begin{equation}\label{eq:factor}
a_{11}\zeta_x + \Bigl(\tfrac12(a_{12}+a_{21}) + i\sqrt{\Delta}\Bigr)\zeta_y = 0,
\end{equation}
and states it ``in the complex notation'' as the Beltrami equation (7.14),
\begin{equation}\label{eq:714}
\zeta_{\zb} - q\,\zeta_z = 0,
\qquad
q = \frac{a_{11} - \sqrt{\Delta} + \tfrac{i}{2}(a_{12}+a_{21})}
{a_{11} + \sqrt{\Delta} - \tfrac{i}{2}(a_{12}+a_{21})}\,.
\end{equation}
From (7.13) he then derives (7.15)--(7.16) and, in particular, (7.17),
\begin{equation}\label{eq:717}
a'_{11} = \frac{i\sqrt{\Delta}}{2J}\,(\zeta_x\bar\zeta_y - \bar\zeta_x\zeta_y) = \sqrt{\Delta}\,.
\end{equation}

\begin{remark}
The misprint reported here is observed in the 1962 Pergamon English
translation~\cite{vekua}. We have not collated \eqref{eq:714} against Vekua's
Russian original, and we therefore do not determine whether the sign error
originates with the author or was introduced in translation or typesetting.
The argument of this note settles the error in the English edition
independently of that question, since it turns only on the internal
consistency of \eqref{eq:factor}, \eqref{eq:char}, and (7.17).
\end{remark}

\section{The discrepancy}\label{sec:discrepancy}

We compute the complex form of (7.13) directly. Throughout, $\zeta_z$ and
$\zeta_{\zb}$ are the Wirtinger derivatives
$\zeta_z = \tfrac12(\zeta_x - i\zeta_y)$,
$\zeta_{\zb} = \tfrac12(\zeta_x + i\zeta_y)$, equivalently
\begin{equation}\label{eq:wirtinger}
\zeta_x = \zeta_z + \zeta_{\zb}, \qquad \zeta_y = i\,(\zeta_z - \zeta_{\zb}),
\end{equation}
the convention in which the quotient $q$ of (7.14) is itself written.

\begin{proposition}\label{prop:convert}
In complex form the factor \eqref{eq:factor} reads $\zeta_{\zb} - q_\star\,\zeta_z = 0$
with
\[
q_\star = -\,\frac{a_{11} - \sqrt{\Delta} + \tfrac{i}{2}(a_{12}+a_{21})}
{a_{11} + \sqrt{\Delta} - \tfrac{i}{2}(a_{12}+a_{21})}\,.
\]
That is, $q_\star = -q$, the negative of the coefficient printed in
\eqref{eq:714}.
\end{proposition}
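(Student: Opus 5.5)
The plan is a direct substitution of the Wirtinger relations \eqref{eq:wirtinger} into the factor \eqref{eq:factor}, followed by collecting the coefficients of $\zeta_z$ and $\zeta_{\zb}$. To keep the bookkeeping light, I will abbreviate $b := \tfrac12(a_{12}+a_{21})$. With this notation \eqref{eq:factor} reads $a_{11}\zeta_x + (b + i\sqrt{\Delta})\,\zeta_y = 0$, where $a_{11}$, $b$ and $\sqrt{\Delta}$ are real. The quantity $\sqrt{\Delta}$ is well defined and positive by the ellipticity hypothesis (7.6).

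The first step replaces $\zeta_x$ by $\zeta_z+\zeta_{\zb}$ and $\zeta_y$ by $i(\zeta_z-\zeta_{\zb})$. The coefficient of $\zeta_y$ then becomes
\[
i(b+i\sqrt{\Delta}) = ib - \sqrt{\Delta}.
\]
Collecting terms, the factor \eqref{eq:factor} is equivalent to
\[
\bigl(a_{11} - \sqrt{\Delta} + ib\bigr)\,\zeta_z + \bigl(a_{11} + \sqrt{\Delta} - ib\bigr)\,\zeta_{\zb} = 0 .
\]

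The second step solves this relation for $\zeta_{\zb}$. To do so I must check that the coefficient $a_{11}+\sqrt{\Delta}-ib$ is nonzero. Its real part is $a_{11}+\sqrt{\Delta}$, which is strictly positive because $a_{11}>0$ and $\Delta>0$ by (7.6). Dividing through by this coefficient gives $\zeta_{\zb} - q_\star\zeta_z = 0$ with exactly the $q_\star$ stated in the proposition. Comparing with \eqref{eq:714} then shows $q_\star=-q$.

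The computation itself is routine, so the only real hazard is sign bookkeeping. The delicate points are the factor $i$ in $\zeta_y = i(\zeta_z-\zeta_{\zb})$ and the product $i\cdot i\sqrt{\Delta} = -\sqrt{\Delta}$. I would guard against an error here in two ways. First, I would invert \eqref{eq:wirtinger} explicitly from the stated definitions of $\zeta_z$ and $\zeta_{\zb}$ before substituting. Second, I would sanity-check the result in the isotropic case $a_{11}=a_{22}=1$, $a_{12}=a_{21}=0$. There $\Delta=1$, and \eqref{eq:factor} becomes $\zeta_x + i\zeta_y = 0$, that is $\zeta_{\zb}=0$. This is consistent with $q_\star=0$, though that case alone does not distinguish $q_\star$ from $q$. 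A non-symmetric test such as $b\neq 0$ would confirm the sign against the factorization \eqref{eq:char}.
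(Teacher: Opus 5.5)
Your proposal is correct and follows the paper's proof essentially line for line. You substitute $\zeta_x=\zeta_z+\zeta_{\bar z}$ and $\zeta_y=i(\zeta_z-\zeta_{\bar z})$ into (7.13), collect the coefficients $a_{11}-\sqrt{\Delta}+ib$ and $a_{11}+\sqrt{\Delta}-ib$ (your $b$ is the paper's $P/2$), and divide by the latter, which is nonzero because its real part $a_{11}+\sqrt{\Delta}$ is positive.
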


\begin{proof}
Write $P := a_{12}+a_{21}$. Substituting \eqref{eq:wirtinger} into
\eqref{eq:factor},
\[
a_{11}(\zeta_z + \zeta_{\zb}) + \Bigl(\tfrac{P}{2} + i\sqrt{\Delta}\Bigr)\,
i\,(\zeta_z - \zeta_{\zb}) = 0 .
\]
Collecting $\zeta_z$ and $\zeta_{\zb}$ and using $i\cdot i\sqrt\Delta = -\sqrt\Delta$,
\[
\Bigl[\,(a_{11} - \sqrt{\Delta}) + \tfrac{i}{2}P\,\Bigr]\zeta_z
\;+\;
\Bigl[\,(a_{11} + \sqrt{\Delta}) - \tfrac{i}{2}P\,\Bigr]\zeta_{\zb}
\;=\; 0 .
\]
The bracket on $\zeta_{\zb}$ is nonzero (its real part $a_{11}+\sqrt\Delta>0$), so
\[
\zeta_{\zb} = -\,\frac{(a_{11} - \sqrt{\Delta}) + \tfrac{i}{2}P}
{(a_{11} + \sqrt{\Delta}) - \tfrac{i}{2}P}\,\zeta_z ,
\]
which is the stated $q_\star$.
\end{proof}

Both $q$ and $q_\star = -q$ have modulus
\[
|q| = |q_\star| = \frac{\sqrt{(a_{11}-\sqrt{\Delta})^2 + \tfrac14 P^2}}
{\sqrt{(a_{11}+\sqrt{\Delta})^2 + \tfrac14 P^2}} < 1 ,
\]
so the discrepancy is not one of modulus or of ellipticity: it is purely a sign.
With the Wirtinger convention \eqref{eq:wirtinger}, the complex form of
\eqref{eq:factor} is exactly the coefficient $q_\star$ above. For completeness,
if one reverses the sign convention for the $y$-derivative, writing instead
$\zeta_y=-i(\zeta_z-\zeta_{\zb})$, the corresponding quotient is
\[
\zeta_{\zb}
=
-\frac{(a_{11}+\sqrt\Delta)-\tfrac i2 P}
{(a_{11}-\sqrt\Delta)+\tfrac i2 P}\,\zeta_z ,
\]
which has modulus exceeding $1$. Thus the printed coefficient in \eqref{eq:714}
is not recovered by a harmless convention change.

\section{Which sign is correct}\label{sec:resolution}

Two of Vekua's own equations fix the sign as $q_\star = -q$.

\medskip\noindent\textbf{(a) The factorization (7.12).}
Equation \eqref{eq:factor} is genuinely a factor of the characteristic
\eqref{eq:char}: with $P = a_{12}+a_{21}$ and $c_\pm := (\tfrac{P}{2} \pm i\sqrt\Delta)/a_{11}$,
\[
a_{11}(\zeta_x + c_+\zeta_y)(\zeta_x + c_-\zeta_y)
= a_{11}\zeta_x^2 + a_{11}(c_++c_-)\zeta_x\zeta_y + a_{11}c_+c_-\,\zeta_y^2 ,
\]
and $c_++c_- = P/a_{11}$, while
$c_+c_- = \bigl(\tfrac{P^2}{4}+\Delta\bigr)/a_{11}^2 = a_{22}/a_{11}$ by
\eqref{eq:Delta}; thus the product is exactly \eqref{eq:char}. So
\eqref{eq:factor} is the equation Vekua intends, and a coordinate solving it
satisfies the canonical conditions \eqref{eq:canon}.

\medskip\noindent\textbf{(b) The canonical coefficient (7.17).}
Vekua's own passage from (7.13) to (7.17) fixes the branch in print: the two
relations he displays on p.~126, just before (7.17),
\[
a_{11}\zeta_x + \tfrac12(a_{12}+a_{21})\,\zeta_y = -\,i\sqrt{\Delta}\,\zeta_y,
\qquad
a_{22}\zeta_y + \tfrac12(a_{12}+a_{21})\,\zeta_x = i\sqrt{\Delta}\,\zeta_x,
\]
are the real factor \eqref{eq:factor} and its $a_{22}$-companion (equivalent to it
modulo \eqref{eq:char}), both carrying $+i\sqrt{\Delta}$, and it is by introducing
them into (7.16) that he reaches (7.17)---so the branch underlying his own
canonical coefficient is exactly the one whose honest complex form is
$q_\star=-q$, not the printed $q$.
A coordinate solving \eqref{eq:factor} returns Vekua's value $a'_{11}=+\sqrt\Delta$.
Indeed, the real and imaginary parts of \eqref{eq:factor} are
\[
a_{11}\xi_x + \tfrac{P}{2}\xi_y - \sqrt{\Delta}\,\eta_y = 0,
\qquad
a_{11}\eta_x + \tfrac{P}{2}\eta_y + \sqrt{\Delta}\,\xi_y = 0,
\]
so that
\[
\xi_x = \frac{-\tfrac{P}{2}\xi_y + \sqrt{\Delta}\,\eta_y}{a_{11}},
\qquad
\eta_x = \frac{-\tfrac{P}{2}\eta_y - \sqrt{\Delta}\,\xi_y}{a_{11}} .
\]
Substituting into \eqref{eq:aprime} and using $4a_{11}a_{22}-P^2 = 4\Delta$,
\[
J = \frac{(\xi_y^2+\eta_y^2)\sqrt{\Delta}}{a_{11}} > 0,
\qquad
a'_{11} = a'_{22} = \sqrt{\Delta},
\qquad
a'_{12} = -a'_{21} = \tfrac12(a_{12}-a_{21}),
\]
identically in $(\xi_y,\eta_y)$. This reproduces (7.17) and the canonical
conditions \eqref{eq:canon}, with the leading coefficient \emph{positive}.

\medskip
The printed coefficient $q$ of \eqref{eq:714} is excluded by~(a) alone. Its
ratio $\zeta_x/\zeta_y = -i(1+q)/(1-q)$ is not a root of the characteristic
\eqref{eq:char} (it is a root only in the conformal case $\Delta = a_{11}^2$,
$P=0$, where $q = q_\star = 0$ and the discrepancy disappears). Hence, although
$|q|<1$ so that $\zeta_{\zb} = q\,\zeta_z$ is a bona fide Beltrami equation, it
solves \emph{neither} factor of \eqref{eq:char}: a coordinate satisfying it does
not meet the canonical conditions \eqref{eq:canon} and yields no canonical form
at all. The printed $q$ is therefore not the conjugate factor $-i\sqrt{\Delta}$
of \eqref{eq:char}---that factor has modulus exceeding $1$---and it does not
correspond to either characteristic factor.

Computation~(b) plays a complementary role: it shows that the branch
$\pm i\sqrt{\Delta}$ selected in the real factor fixes the sign of the leading
coefficient. The branch $+i\sqrt{\Delta}$ of \eqref{eq:factor} returns
$a'_{11} = +\sqrt{\Delta}$, whereas the conjugate branch $-i\sqrt{\Delta}$ would
return $a'_{11} = -\sqrt{\Delta}$, in conflict with~(7.17). This singles out
$+i\sqrt{\Delta}$, and hence \eqref{eq:factor} itself, as the factor Vekua
intends. Both of his own equations therefore require its honest complex form,
$q_\star = -q$.

\begin{remark}
The misprint is confined to the display \eqref{eq:714}. The subsequent computation
(7.15)--(7.17), and the canonical reduction it leads to, are carried out from the
real factor \eqref{eq:factor}; they are correct as printed and are unaffected by
the sign in \eqref{eq:714}.
\end{remark}

\section{The companion misprint in (7.23)}\label{sec:723}

A related sign defect appears one page later, in the reduction of a second-order
equation. For the elliptic equation (7.21),
\begin{equation}\label{eq:721}
a\,u_{xx} + 2b\,u_{xy} + c\,u_{yy}
   + F\!\left(x,y,u,u_x,u_y\right) = 0,
\qquad \Delta := ac - b^2 > 0,\quad a>0,
\end{equation}
Vekua reduces to canonical form by a homeomorphism $\zeta = \xi + i\eta = \zeta(z)$
of the Beltrami equation (7.23), printed (p.~127) as
\begin{equation}\label{eq:723}
\zeta_{\zb} - q\,\zeta_z = 0,
\qquad
q = \frac{a - \sqrt{\Delta} - ib}{a + \sqrt{\Delta} + ib}\,.
\end{equation}
On p.~128 he records the consequence he requires of \eqref{eq:723}: for the
linear equation (7.24), the change of variables (7.25) with $\xi+i\eta$ a
homeomorphism of \eqref{eq:723} produces (7.26),
\[
u_{\xi\xi} + u_{\eta\eta} + (\text{lower-order terms}) = 0,
\]
whose principal part is the Laplacian. As with \eqref{eq:714}, equation
\eqref{eq:723} is asserted, not derived, on these pages; we test it against the
reduction it is meant to effect.

Under any change of variables $\xi=\xi(x,y),\ \eta=\eta(x,y)$ the principal part
of \eqref{eq:721} transforms to
$A\,u_{\xi\xi}+2B\,u_{\xi\eta}+C\,u_{\eta\eta}$ with
\begin{equation}\label{eq:ABC}
\begin{aligned}
A &= a\,\xi_x^2 + 2b\,\xi_x\xi_y + c\,\xi_y^2,\\
B &= a\,\xi_x\eta_x + b\,(\xi_x\eta_y+\xi_y\eta_x) + c\,\xi_y\eta_y,\\
C &= a\,\eta_x^2 + 2b\,\eta_x\eta_y + c\,\eta_y^2,
\end{aligned}
\end{equation}
and (7.26) demands $A = C$ and $B = 0$; ellipticity gives $A=C>0$, and division
by $A$ then normalizes the leading part to $u_{\xi\xi}+u_{\eta\eta}$ and produces
the lower-order coefficients (7.27). Writing $\zeta=\xi+i\eta$, the two real
conditions combine into the single complex equation
\begin{equation}\label{eq:char2}
(A-C) + 2iB \;=\; a\,\zeta_x^2 + 2b\,\zeta_x\zeta_y + c\,\zeta_y^2 \;=\; 0,
\end{equation}
which is exactly the characteristic \eqref{eq:char} under the identification
\begin{equation}\label{eq:dict}
a_{11}\leftrightarrow a,\qquad
\tfrac12(a_{12}+a_{21})\leftrightarrow b,\qquad
a_{22}\leftrightarrow c,
\end{equation}
beneath which $\Delta = a_{11}a_{22}-\tfrac14(a_{12}+a_{21})^2$ becomes $ac-b^2$.

\begin{proposition}\label{prop:723}
Let $a>0$ and $\Delta = ac-b^2>0$. At points where $d\zeta\neq 0$, a coordinate
$\zeta=\xi+i\eta$ reduces the principal part of \eqref{eq:721} to a scalar
multiple of $u_{\xi\xi}+u_{\eta\eta}$ if and only if its differential satisfies
\begin{equation}\label{eq:char2pointwise}
a\,\zeta_x^2 + 2b\,\zeta_x\zeta_y + c\,\zeta_y^2 = 0 .
\end{equation}
The unique Beltrami coefficient with $|q|<1$ for which
$\zeta_{\zb}=q\,\zeta_z$ has this property is
\begin{equation}\label{eq:723corrected}
q \;=\; -\,\frac{a - \sqrt{\Delta} + ib}{a + \sqrt{\Delta} - ib}
   \;=\; \frac{c - a - 2ib}{a + c + 2\sqrt{\Delta}}\,.
\end{equation}
The coefficient printed in \eqref{eq:723} equals \eqref{eq:723corrected} only
when $a=c$; for $a\neq c$ it gives $A\neq C$, so a coordinate solving
\eqref{eq:723} does not reduce {\rm(7.24)} to the canonical form {\rm(7.26)}.
\end{proposition}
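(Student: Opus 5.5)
The plan has four steps: (i) reduce the geometric condition to \eqref{eq:char2pointwise}; (ii) factor that quadratic and convert each factor to Beltrami form as in Proposition~\ref{prop:convert}; (iii) simplify the resulting coefficient to the second expression in \eqref{eq:723corrected}; (iv) compare it with the printed \eqref{eq:723}.

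\textbf{Step (i): the equivalence.} Write $\zeta_x^2=\xi_x^2-\eta_x^2+2i\xi_x\eta_x$, $\zeta_x\zeta_y=\xi_x\xi_y-\eta_x\eta_y+i(\xi_x\eta_y+\xi_y\eta_x)$, $\zeta_y^2=\xi_y^2-\eta_y^2+2i\xi_y\eta_y$. Comparing with \eqref{eq:ABC} gives the identity \eqref{eq:char2}, $(A-C)+2iB=a\zeta_x^2+2b\zeta_x\zeta_y+c\zeta_y^2$.

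The principal part is a scalar multiple $\lambda(u_{\xi\xi}+u_{\eta\eta})$ exactly when $A=C$ and $B=0$, i.e.\ when \eqref{eq:char2pointwise} holds. The scalar is nonzero: positive definiteness of $\begin{pmatrix}a&b\\ b&c\end{pmatrix}$ gives $A\ge0$, with equality only if $\nabla\xi=0$. Likewise $C=0$ only if $\nabla\eta=0$. So $A=C=0$ would force $d\zeta=0$, which is excluded.

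\textbf{Step (ii): factoring and converting to Beltrami form.} Under the dictionary \eqref{eq:dict}, the computation of \S\ref{sec:resolution}(a) factors the quadratic as $a(\zeta_x+c_+\zeta_y)(\zeta_x+c_-\zeta_y)$ with $c_\pm=(b\pm i\sqrt\Delta)/a$. Proposition~\ref{prop:convert}, transported by \eqref{eq:dict}, converts the $+$ factor into $\zeta_{\zb}=q\zeta_z$ with $q$ the first expression in \eqref{eq:723corrected}. The reversed-sign computation after that proposition shows the $-$ factor has coefficient of modulus $>1$.

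For uniqueness, take any $q$ with $|q|<1$. Then $\zeta_{\zb}=q\zeta_z$ together with $d\zeta\ne0$ forces $\zeta_z\ne0$, and by \eqref{eq:wirtinger}
\[
\zeta_x=(1+q)\zeta_z,\qquad \zeta_y=i(1-q)\zeta_z .
\]
Hence the ratio $\zeta_x/\zeta_y=-i(1+q)/(1-q)$ is an injective (M\"obius) function of $q$. For solutions of $\zeta_{\zb}=q\zeta_z$ to satisfy \eqref{eq:char2pointwise}, this ratio must be one of the two roots $-c_\pm^{-1}$. Only the $+$ root corresponds to $|q|<1$, which gives uniqueness.

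\textbf{Step (iii): simplification.} Put $s=\sqrt\Delta$ and multiply numerator and denominator by $a+s+ib$. The numerator becomes $(a+ib)^2-s^2=a(a-c)+2iab$, using $s^2=ac-b^2$. The denominator becomes $(a+s)^2+b^2=a(a+c+2s)$. Cancelling $a>0$ and the minus sign yields $(c-a-2ib)/(a+c+2s)$.

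\textbf{Step (iv): comparison with the printed coefficient.} The same manipulation gives the printed coefficient of \eqref{eq:723} as $q_p=(a-c-2ib)/(a+c+2s)$, which is $-\overline{q}$. Thus $q_p$ equals the corrected $q$ iff $a-c=c-a$, i.e.\ iff $a=c$. For $a\neq c$ we still have $|q_p|<1$, so uniqueness from step (ii) shows a solution of \eqref{eq:723} violates \eqref{eq:char2pointwise}.

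\textbf{Main obstacle.} The claim specifically reads $A\neq C$, not just $(A-C)+2iB\neq0$. To get it, substitute the displayed expressions for $\zeta_x,\zeta_y$ to obtain
\[
(A-C)+2iB=\zeta_z^2\,E(q_p),\qquad E(q)=a(1+q)^2-2ib(1-q^2)-c(1-q)^2 .
\]
I would compute $E(q_p)$ explicitly and expect a nonzero multiple of $(a-c)$. The difficulty is that $A-C=\operatorname{Re}\bigl(\zeta_z^2E(q_p)\bigr)$ depends on the phase of $\zeta_z$. So I expect to show only that $A=C$ and $B=0$ cannot hold simultaneously, with $A\neq C$ at generic points or for the natural solution. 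If needed, I would state the failure in the weaker form that (7.26) is not attained.
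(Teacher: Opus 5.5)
Your steps (i)--(iii) follow the paper's proof: the dictionary \eqref{eq:dict}, Proposition~\ref{prop:convert}, and the modulus of the conjugate factor. You also make explicit two points the paper leaves tacit. First, the scalar multiple is nonzero, by positive definiteness. Second, each root of \eqref{eq:char2pointwise} determines exactly one $q$, because $q\mapsto -i(1+q)/(1-q)$ is injective.

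The genuine difference is in how you show the printed coefficient fails. The paper freezes the coefficients and takes the representative $\zeta=z+q\bar z$, so that $\zeta_z=1$. It then computes explicitly
\[
A-C=\frac{4(a-c)\bigl[(a-c)^2+4\Delta+2(a+c)\sqrt\Delta\bigr]}{(a+c+2\sqrt\Delta)^2}.
\]
You instead get the failure of (7.26) with no computation: $q_p=-\overline{q_\star}\neq q_\star$, $|q_p|<1$, and uniqueness. Your route is shorter and visibly independent of which solution is used. The paper's route gives quantitative information: which condition fails, and that $A-C$ has the sign of $a-c$.

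Your caution about the literal claim ``$A\neq C$'' is justified. The paper's scaling argument transfers only the joint condition $A=C$, $B=0$. The map $e^{i\theta}(z+q_p\bar z)$ also solves the frozen equation, with $(A-C)+2iB=e^{2i\theta}E(q_p)$, and this is purely imaginary for a suitable $\theta$. So $A\neq C$ is a statement about the normalized representative, i.e.\ your ``natural solution''. You recover the paper's formula as $\operatorname{Re}E(q_p)$.

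Before doing that computation, fix three slips.
\begin{itemize}
\item Since $\zeta_x\zeta_y=i(1-q^2)\zeta_z^2$, the $b$-term in $E$ carries a plus sign:
\[
E(q)=a(1+q)^2+2ib(1-q^2)-c(1-q)^2=(a-c+2ib)+2(a+c)q+(a-c-2ib)q^2.
\]
With your minus sign, $E(q_\star)\neq 0$; take $a=c$, $b\neq 0$ to see this.
\item The roots of \eqref{eq:char2pointwise} in the ratio $\zeta_x/\zeta_y$ are $-c_\pm$, not $-c_\pm^{-1}$.
\item The paper's ``reversed-sign'' quotient comes from reversing the $y$-convention, not from the conjugate factor. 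The $-$ factor actually gives $1/\overline{q_\star}$, the complex conjugate of that quotient, so your modulus claim still holds.
\end{itemize}
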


\begin{proof}
The expressions \eqref{eq:ABC} are the standard transformation of a second-order
principal part, and \eqref{eq:char2} is their complex combination, identical to
\eqref{eq:char} under \eqref{eq:dict}. The factor of \eqref{eq:char2} carrying
$+i\sqrt\Delta$, namely $a\,\zeta_x + (b+i\sqrt\Delta)\,\zeta_y = 0$, is
\eqref{eq:factor} under \eqref{eq:dict}; Proposition~\ref{prop:convert} converts
it to $\zeta_{\zb} = q_\star\zeta_z$ with $q_\star$ equal to the right-hand side
of \eqref{eq:723corrected}, and $|q_\star|<1$. The conjugate factor gives the
reciprocal coefficient, of modulus exceeding $1$; hence \eqref{eq:723corrected}
is the unique admissible coefficient.

For the printed \eqref{eq:723}, it is enough to test the induced differential
pointwise. The conditions $A=C$, $B=0$ are pointwise conditions on
$d\zeta$ and are invariant under multiplication of $d\zeta$ by a nonzero complex
scalar. We may therefore freeze the coefficients at a point and use the affine
representative $\zeta=z+q\,\bar z$, for which
\[
\xi_x=1+\operatorname{Re}q,\qquad
\xi_y=\eta_x=\operatorname{Im}q,\qquad
\eta_y=1-\operatorname{Re}q .
\]
A direct computation with $q$ as printed in \eqref{eq:723} yields
\[
A - C \;=\;
\frac{4\,(a-c)\,\bigl[(a-c)^2 + 4\Delta + 2(a+c)\sqrt{\Delta}\,\bigr]}
{\bigl(a+c+2\sqrt{\Delta}\,\bigr)^2}\,.
\]
The bracketed factor and the denominator are positive, so $A-C$ has the sign of
$a-c$; it vanishes only in the special case $a=c$. The coefficient
\eqref{eq:723corrected}, by contrast, gives $A-C=0$ and $B=0$ identically.
\end{proof}

\begin{remark}
The two misprints are related but not identical. Printed \eqref{eq:714} is the
negative of the correct coefficient: a clean sign error. Printed \eqref{eq:723}
is $-\overline{q_\star}$: it carries the wrong sign on its real part alone, and
agrees with \eqref{eq:723corrected} only in the special symmetric case $a=c$.
The two printed coefficients are complex conjugates of one another under
\eqref{eq:dict}. This conjugation reflects the bookkeeping between the two
standard ways of writing the characteristic: the slope form
$a\,dy^2 - 2b\,dx\,dy + c\,dx^2 = 0$ used for second-order equations carries
$-2b$, against the term
$+(a_{12}+a_{21})\,\zeta_x\zeta_y$ in \eqref{eq:char}. The intervening
$b\mapsto -b$ accounts for the conjugation. In both displays, the printed
Beltrami coefficient is not the one obtained from the real characteristic factor
used in the surrounding reduction.
\end{remark}

\section{Corrected statements}\label{sec:corrected}

\subsection{Equation (7.14)}
Equation (7.14) of \cite{vekua} should read
\[
\zeta_{\zb} - q\,\zeta_z = 0,
\qquad
q = -\,\frac{a_{11} - \sqrt{\Delta} + \tfrac{i}{2}(a_{12}+a_{21})}
{a_{11} + \sqrt{\Delta} - \tfrac{i}{2}(a_{12}+a_{21})}\,,
\]
equivalently
\[
q = \frac{\sqrt{\Delta} - a_{11} - \tfrac{i}{2}(a_{12}+a_{21})}
{a_{11} + \sqrt{\Delta} - \tfrac{i}{2}(a_{12}+a_{21})}\,,
\]
the complex form of the factor (7.13).

\subsection{Equation (7.23)}
Likewise, equation (7.23) of \cite{vekua} should read
\[
\zeta_{\zb} - q\,\zeta_z = 0,
\qquad
q = -\,\frac{a-\sqrt{\Delta}+ib}{a+\sqrt{\Delta}-ib}
  = \frac{c-a-2ib}{a+c+2\sqrt{\Delta}},
\qquad \Delta = ac-b^2,
\]
the complex form of the characteristic factor
$a\,\zeta_x + \bigl(b+i\sqrt{\Delta}\bigr)\zeta_y = 0$.

\subsection*{Use of generative AI tools}

The author discloses
the use of Anthropic's Claude (Claude Opus 4.8, accessed through the Claude.ai web
interface, June 2026) in the preparation of this note. The inconsistency between
equations (7.13) and (7.14) was identified in the course of dialogue with the
tool; in a subsequent review of the manuscript the tool further identified that
the second-order coefficient (7.23) carries a related defect, and that the
consistency of (7.14) with (7.23) was a question the note had to resolve, which
prompted Section~\ref{sec:723}.

The conversion of Proposition~\ref{prop:convert}, the factorization and the
coefficient computation of Section~\ref{sec:resolution}, and the canonical-form
criterion and failure computation of Proposition~\ref{prop:723}---the conditions
$A=C$, $B=0$ and the explicit expression for $A-C$---were stress-tested by exact
rational computer algebra (SymPy), with numerical cross-checks (NumPy) on
representative coefficients; the scripts were produced with the tool's assistance
and re-run independently by the author. The prose was drafted in iterative
dialogue, with all statements, computations, and their wording reviewed by the
author. The author takes full responsibility for the correctness, accuracy,
originality, and integrity of all content.

\subsection*{Disclosure of interest}

The author reports there are no competing interests to declare.

\end{document}